\newtheorem{thm}{Theorem}
\newtheorem{lem}[thm]{Lemma}
\newtheorem{cor}[thm]{Corollary}
\begin{document}

\title[Derivations and linear functions along rational functions]{Derivations and linear functions\\ along rational functions}

\author[E.~Gselmann]{Eszter Gselmann}

\address{Institute of Mathematics\\
University of Debrecen\\
P. O. Box: 12.\\
Debrecen\\
H--4010\\
Hungary}

\email{gselmann@science.unideb.hu}

\thanks{This research has been supported by the Hungarian Scientific Research Fund (OTKA)
Grant NK 814 02 and by the T\'{A}MOP 4.2.1./B-09/1/KONV-2010-0007 project implemented
through the New Hungary Development Plan co-financed by the European Social Fund and
the European Regional Development Fund.}

\subjclass{Primary 39B82; Secondary 39B72}

\keywords{derivation, linear function, polynomial function}

\date{\today}

\begin{abstract}
The main purpose of this paper is to give characterization theorems on derivations as well as on linear functions. 
Among others the following problem will be investigated:
Let $n\in\mathbb{Z}$, $f, g\colon\mathbb{R}\to\mathbb{R}$ be additive functions, 
$\left(\begin{array}{cc}
a&b\\
c&d
\end{array}
\right)\in\mathbf{GL}_{2}(\mathbb{Q})$ be arbitrarily fixed, 
and let us assume that the mapping 
\[
 \phi(x)=g\left(\frac{ax^{n}+b}{cx^{n}+d}\right)-\frac{x^{n-1}f(x)}{(cx^{n}+d)^{2}} 
\quad 
\left(x\in\mathbb{R}, cx^{n}+d\neq 0\right)
\]
satisfies some regularity on its domain (e.g. (locally) boundedness, continuity, measurability). 
Is it true that in this case the above functions can be represented as a sum of a derivation and a linear function? 
Analogous statements ensuring linearity will also be presented. 
\end{abstract}

\maketitle

\section{Introduction and preliminaries} 

Throughout this paper $\mathbb{N}$ denotes the set of the positive integers, and $\mathbb{Z}, \mathbb{Q}$, and
$\mathbb{R}$ have the usual meaning. 

The aim of this work is to prove characterization theorems on derivations as well as on linear functions. 
Therefore, firstly we have to recall some definitions and auxiliary results. 

A function $f:\mathbb{R}\rightarrow\mathbb{R}$ is called an \emph{additive} function
if,
\[
f(x+y)=f(x)+f(y)
\]
holds for all $x, y\in\mathbb{R}$. 

We say that an additive
function $f:\mathbb{R}\rightarrow\mathbb{R}$ is a \emph{derivation} if
\[
f(xy)=xf(y)+yf(x)
\]
is fulfilled for all $x, y\in\mathbb{R}$.

The additive function $f\colon\mathbb{R}\to\mathbb{R}$ is termed to be a \emph{linear function} if 
$f$ is of the form 
\[
 f(x)=f(1)x 
\qquad 
\left(x\in\mathbb{R}\right). 
\]

It is easy to see from the above definition that every derivation
$f:\mathbb{R}\rightarrow\mathbb{R}$ satisfies equation
\[
\tag{$\ast$} f(x^{k})=kx^{k-1}f(x)
\quad
\left(x\in\mathbb{R}\setminus\left\{0\right\}\right)
\]
for arbitrarily fixed $k\in\mathbb{Z}\setminus\left\{0\right\}$.
Furthermore, the converse is also true, in the following sense:
if $k\in\mathbb{Z}\setminus\left\{0, 1\right\}$ is fixed and
an additive function $f:\mathbb{R}\rightarrow\mathbb{R}$ satisfies
$\left(\ast\right)$, then $f$ is a derivation, see e.g.,
Jurkat \cite{Jur65}, Kurepa \cite{Kur64}, and
Kannappan--Kurepa \cite{KK70}.

Concerning linear functions, Jurkat \cite{Jur65} and,
independently, Kurepa \cite{Kur64} proved that every additive function
$f:\mathbb{R}\rightarrow\mathbb{R}$ satisfying
\[
f\left(\frac{1}{x}\right)=\frac{1}{x^{2}}f(x)
\quad
\left(x\in\mathbb{R}\setminus\left\{0\right\}\right)
\]
has to be linear.

In \cite{NH68} A.~Nishiyama and S.~Horinouchi investigated
additive functions $f:\mathbb{R}\rightarrow\mathbb{R}$
satisfying the additional equation
\[
f(x^{n})=cx^{k}f(x^{m})
\quad
\left(x\in\mathbb{R}\setminus\left\{0\right\}\right),
\]
where $c\in\mathbb{R}$ and $n, m, k\in\mathbb{Z}$ are arbitrarily
fixed. 

Furthermore, the above problem was generalized by Pl.~Kannappan and S.~Kurepa 
in \cite{KK70} by proving the following statement.

\begin{thm}\label{T1}
Let $f, g:\mathbb{R}\rightarrow\mathbb{R}$ be additive functions and
$n, m\in\mathbb{Z}\setminus\left\{0\right\}$, $n\neq m$.
Suppose that
\[
f(x^{n})=x^{n-m}g(x^{m})
\]
holds for all $x\in\mathbb{R}\setminus\left\{0\right\}$.
Then the functions $F, G:\mathbb{R}\rightarrow\mathbb{R}$ defined by
\[
F(x)=f(x)-f(1)x
\quad
\text{and}
\quad
G(x)=g(x)-g(1)x
\quad
\left(x\in\mathbb{R}\right)
\]
are derivations and $nF(x)=mG(x)$ is fulfilled for all $x\in\mathbb{R}$.
\end{thm}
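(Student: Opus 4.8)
The plan is to reduce the identity $f(x^{n})=x^{n-m}g(x^{m})$ to the characteristic equation $(\ast)$ for a single function by a suitable substitution, and then invoke the Jurkat--Kurepa--Kannappan result quoted in the introduction. First I would handle the degenerate values of the exponents. Writing $k=n-m$, note that if $k=0$ then $n=m$, which is excluded; so $k\neq 0$. The two cases to keep an eye on are $n=1$ and $m=1$, since the converse part of $(\ast)$ requires the exponent to differ from $0$ and $1$.

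Next I would try the substitution $x\mapsto x^{m}$ in a companion equation or, more directly, apply the hypothesis twice. Replacing $x$ by $x^{m}$ in $f(x^{n})=x^{n-m}g(x^{m})$ gives $f(x^{nm})=x^{m(n-m)}g(x^{m^{2}})$, and replacing $x$ by $x^{n}$ gives $f(x^{n^{2}})=x^{n(n-m)}g(x^{nm})$. The point is to combine these so that $g$ is evaluated on a single power and then use the hypothesis once more to eliminate $g$ in favour of $f$, arriving at an equation of the form $f(x^{N})=N x^{N-1}f(x)$ for an explicit integer $N$ (and symmetrically for $g$). Concretely, from the hypothesis one gets $g(x^{m})=x^{m-n}f(x^{n})$, so substituting $x^{n}$ for $x$ there, $g(x^{nm})=x^{n(m-n)}f(x^{n^{2}})$; comparing with $f(x^{n^{2}})=x^{n(n-m)}g(x^{nm})$ yields a tautology, so a slightly more careful bookkeeping is needed — the trick is to feed the output of one application back as the input of the other on a common power $x^{nm}$ and track the scalar factors, which forces the derivation property.

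Once $F$ and $G$ are shown to be derivations, the relation $nF(x)=mG(x)$ should follow by substituting the decompositions $f=F+f(1)\,\mathrm{id}$ and $g=G+g(1)\,\mathrm{id}$ into the original identity, using $(\ast)$ for $F$ and $G$ (namely $F(x^{n})=nx^{n-1}F(x)$ and $G(x^{m})=mx^{m-1}G(x)$), and comparing the additive part with the linear part: the linear parts must match separately, pinning down $f(1)$ in terms of $g(1)$, and what remains is exactly $n x^{n-1}F(x)=x^{n-m}\cdot m x^{m-1}G(x)=m x^{n-1}G(x)$, whence $nF(x)=mG(x)$ after cancelling $x^{n-1}$ for $x\neq 0$ and extending to all of $\mathbb{R}$ by additivity (taking $x=1$ separately).

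The main obstacle I expect is the bookkeeping in the case analysis on $n$ and $m$: when one of them equals $1$ the equation $(\ast)$ in its converse form is not directly available for that function, so I would have to obtain the derivation property for the other function first (whose exponent is then $\neq 0,1$ because $n\neq m$) and only afterwards deduce it for the one with exponent $1$ from the original relation. Signs of $n$ and $m$ (negative exponents) also need care, but since $(\ast)$ is stated for all $k\in\mathbb{Z}\setminus\{0\}$ and derivations satisfy $f(1/x)=-x^{-2}f(x)$, these can be absorbed without genuine difficulty.
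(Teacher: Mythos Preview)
The paper does not prove this theorem: it is quoted from Kannappan and Kurepa \cite{KK70} as a background result and is used as a black box in the proof of Theorem~\ref{Thm2.1}. So there is no ``paper's own proof'' to compare against.

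That said, your proposal has a genuine gap at the crucial step. The substitutions $x\mapsto x^{m}$ and $x\mapsto x^{n}$ followed by eliminating $g$ yield, as you yourself observed, the tautology $f(x^{n^{2}})=f(x^{n^{2}})$; the subsequent sentence about ``feeding the output of one application back as the input of the other on a common power $x^{nm}$'' is not a different manoeuvre---it is exactly the computation you just did, and it gives nothing. Power substitutions alone cannot work here: the hypothesis is $\mathbb{Q}$-homogeneous of the same degree on both sides, so replacing $x$ by any monomial in $x$ only reproduces instances of the same identity. What is missing is a use of additivity that mixes different arguments, for example the substitution $x\mapsto x+y$ (or $x\mapsto x+1$) and expansion via the binomial theorem inside $f$ and $g$. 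In the model case $n=2$, $m=1$ this immediately gives $2f(x)=g(x)+g(1)x$, from which $F(x^{2})=2xF(x)$ follows; the general case proceeds by an analogous polarisation, and this is indeed the line taken in \cite{KK70}.

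Your final paragraph---deducing $nF=mG$ once $F$ and $G$ are known to be derivations by substituting $f=F+f(1)\,\mathrm{id}$, $g=G+g(1)\,\mathrm{id}$ back into the hypothesis---is correct and routine; the difficulty is entirely in establishing the derivation property, and for that your current argument does not yet contain the key idea.
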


Here we remark that the 'approximate' variant of the above theorem was dealt with in \cite{BG10}. 

Furthermore, in a series of papers  (see \cite{HKR99, HKR00, HK00, HK00a, HKR01}) F.~Halter--Koch and 
L.~Reich proved several characterization theorems concerning derivations as well as field 
homomorphisms.

Let $\mathbb{K}$ be a field containing $\mathbb{Q}$, $n\in\mathbb{Z}\setminus\left\{0\right\}$, 
$\left(\begin{array}{cc}
a&b\\
c&d
\end{array}
\right)\in\mathbf{GL}_{2}(\mathbb{Q})$ and let 
$f, g\colon\mathbb{K}\to\mathbb{K}$ be additive function so that 
\[
\tag{$\star$} f\left(\frac{ax^{n}+b}{cx^{n}+d}\right)=\frac{x^{n-1}g(x)}{(cx^{n}+d)^{2}}, 
\]
respectively, 
\[
\tag{$\blacktriangle$} f\left(\frac{ax^{n}+b}{cx^{n}+d}\right)=\frac{ag(x)^{n}+b}{cg(x)^{n}+d}
\]
holds for all possible values of $x$. In \cite{HK00} it is proved that 
equation $\left(\star\right)$ (under a mild condition) implies for the function $g$ that the function 
$G\colon\mathbb{K}\to\mathbb{K}$ defined by 
\[
G(x)=g(x)-g(1)x 
\qquad 
\left(x\in\mathbb{K}\right)
\]
is a derivation. 
Furthermore, in \cite{HKR01} the authors succeed to prove that equation $\left(\blacktriangle\right)$ 
furnishes that the mapping $g(1)^{-1}\cdot g\colon\mathbb{K}\to\mathbb{K}$ is a field automorphism. 

In this work we will extend the above mentioned results, and (among others) the following problem will be dealt with. 
Let $n\in\mathbb{Z}$, $f, g\colon\mathbb{R}\to\mathbb{R}$ be additive functions, 
$\left(\begin{array}{cc}
a&b\\
c&d
\end{array}
\right)\in\mathbf{GL}_{2}(\mathbb{Q})$ be arbitrarily fixed, 
and let us assume that the mapping 
\[
 \phi(x)=g\left(\frac{ax^{n}+b}{cx^{n}+d}\right)-\frac{x^{n-1}f(x)}{(cx^{n}+d)^{2}} 
\quad 
\left(x\in\mathbb{R}, cx^{n}+d\neq 0\right)
\]
satisfies some regularity on its domain (e.g. (locally) boundedness, continuity, measurability). 
Is it true that in this case the above functions can be represented as a sum of a derivation and a linear function?

During the second section the concept of multi-additive functions will be used. Therefore, at this part we will list some definitions and 
statements that we will use subsequently. 

Let $G, H$ be abelian groups, let $h\in G$ be arbitrary and consider a function $f:G\rightarrow H$. 
The \emph{difference operator} $\Delta_{h}$ with the span $h$ of the function $f$ is defined by
\[
 \Delta_{h}f(x)=f(x+h)-f(x)
\qquad 
\left(x\in G\right). 
\]
The iterates $\Delta^{n}_{h}$ of $\Delta_{h}$, $n=0, 1, \ldots$ are defined by the recurrence 
\[
 \Delta^{0}_{h}f=f, \qquad 
\Delta^{n+1}_{h}f=\Delta_{h}\left(\Delta^{n}_{h}f\right)
\qquad 
\left(n=0, 1, \ldots\right)
\]
Furthermore, the superposition of several difference operators will be denoted shortly 
\[
 \Delta_{h_{1} \ldots h_{n}}f=\Delta_{h_{1}}\ldots \Delta_{h_{n}}f, 
\]
where $n\in\mathbb{N}$ and $h_{1}, \ldots, h_{n}\in G$. 

Let $n\in\mathbb{N}$ and $G, H$ be abelian groups.
A function $F:G^{n}\rightarrow H$ is
called \emph{$n$--additive} if, for every
$ i \in \{\, 1 \,,\, 2 \,,\, \dots \,,\, n \,\} $
and for every
$ x_{1}, \ldots, x_{n}, y_{i}\in G \,$,
\begin{multline*}
F\left(x_{1}, \ldots, x_{i-1}, x_{i} + y_{i}, x_{i+1}, \ldots, x_{n}\right)
\\
=F\left(x_{1}, \ldots, x_{i-1}, x_{i}, x_{i+1}, \ldots, x_{n}\right)
+F\left(x_{1}, \ldots, x_{i-1}, y_{i}, x_{i+1}, \ldots, x_{n}\right),
\end{multline*}
i.e., $F$ is additive in each of its variables
$x_{i}\in G$, $i=1, \ldots, n$. 
For the sake of brevity we use the notation $G^{0}=G$ and we call 
constant functions from $G$ to $H$ $0$--additive functions. 
Let $F:G^{n}\rightarrow H$ be an arbitrary function. 
By the \emph{diagonalization (or trace)} of $F$ we understand the
function
$f:G\rightarrow H$ arising from
$F$ by putting all the variables (from $G$)
equal:
\[
f(x)=F(x, \ldots, x)
\qquad
\left(x\in G\right).
\]
It can be proved by induction that for any symmetric, $n$--additive function
$F:G^{n}\rightarrow H$ the equality 
\[
 \Delta_{y_{1}, \ldots, y_{k}}f(x)=
\left\{
\begin{array}{lcl}
 n!F(y_{1}, \ldots, y_{n})& \text{for}& k=n \\
0 & \text{for} & k>n
\end{array}
\right.
\]
holds, whenever $x, y_{1}, \ldots, y_{n}\in G$, where $f:G\rightarrow H$ denotes the trace of the 
function $F$. This means that a symmetric, $n$--additive function is uniquely determined by its trace. 

The function $f:G\rightarrow H$ is called a \emph{polynomial function} of degree at most $n$, where 
$n$ is a nonnegative integer, if 
\[
 \Delta_{y_{1}, \ldots, y_{n+1}}f(x)=0
\]
is satisfied for all $x, y_{1}, \ldots, y_{n+1}\in G$. 

\begin{thm}[Kuczma \cite{Kuc09}, Sz\'{e}kelyhidi \cite{Sze91}]\label{T1.1}
The function $p:G\rightarrow H$ is a polynomial at degree at most $n$ if and only if there 
exist symmetric, $k$--additive functions $F_{k}:G^{k}\rightarrow H$, $k=0, 1, \ldots, n$ such that 
\[
 p(x)=\sum_{k=0}^{n}f_{k}(x) 
\qquad 
\left(x\in G\right), 
\]
where $f_{k}$ denotes the trace of the function $F_{k}$, $k=0, 1, \ldots, n$. 
Furthermore, this expression for the function $p$ is unique in the sense that the 
functions $F_{k}$, which are not identically zero,  are uniquely determined. 
\end{thm}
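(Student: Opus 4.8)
The plan is to establish the two implications separately. The ``if'' part is immediate: if $p=\sum_{k=0}^{n}f_{k}$, where $f_{k}$ is the trace of a symmetric $k$--additive function $F_{k}$, then by the diagonalization identity recalled above each summand satisfies $\Delta_{y_{1},\ldots,y_{n+1}}f_{k}=0$ (it is acted on by $n+1>k$ difference operators), and summing over $k$ gives $\Delta_{y_{1},\ldots,y_{n+1}}p=0$, i.e.\ $p$ is a polynomial function of degree at most $n$.

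For the converse I would proceed by induction on $n$. If $n=0$, then $\Delta_{y}p(x)=0$ for all $x,y\in G$ says precisely that $p$ is constant, so $p=f_{0}$ with $F_{0}$ the corresponding $0$--additive function. Assume the statement holds for $n-1$ and let $p$ satisfy $\Delta_{y_{1},\ldots,y_{n+1}}p=0$. The central step is to extract the ``leading term'' by the polarization formula
\[
F_{n}(y_{1},\ldots,y_{n})=\frac{1}{n!}\,\Delta_{y_{1},\ldots,y_{n}}p(x).
\]
One first checks that the right-hand side does not depend on $x$, since $\Delta_{x'}\bigl(\Delta_{y_{1},\ldots,y_{n}}p\bigr)(x)=\Delta_{y_{1},\ldots,y_{n},x'}p(x)=0$ by hypothesis. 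Next, $F_{n}$ is symmetric in its variables because the operators $\Delta_{y_{i}}$ commute (as one sees from the explicit expansion of an iterated difference), and $F_{n}$ is $n$--additive: applying the elementary identity $\Delta_{u+v}=\Delta_{u}+\Delta_{v}+\Delta_{u}\Delta_{v}$ to the function $\Delta_{y_{2},\ldots,y_{n}}p$ with $u=y_{1}$, $v=y_{1}'$ shows that $\Delta_{y_{1}+y_{1}',y_{2},\ldots,y_{n}}p$ equals $\Delta_{y_{1},y_{2},\ldots,y_{n}}p+\Delta_{y_{1}',y_{2},\ldots,y_{n}}p+\Delta_{y_{1},y_{1}',y_{2},\ldots,y_{n}}p$, whose last term carries $n+1$ spans and hence vanishes; additivity in the remaining slots then follows from symmetry. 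Thus $F_{n}$ is a symmetric $n$--additive function, and if $f_{n}$ denotes its trace, the diagonalization identity gives $\Delta_{y_{1},\ldots,y_{n}}f_{n}=n!\,F_{n}(y_{1},\ldots,y_{n})=\Delta_{y_{1},\ldots,y_{n}}p$. Consequently $q:=p-f_{n}$ satisfies $\Delta_{y_{1},\ldots,y_{n}}q=0$, i.e.\ $q$ is a polynomial function of degree at most $n-1$; by the induction hypothesis $q=\sum_{k=0}^{n-1}f_{k}$ for suitable symmetric $k$--additive $F_{k}$, and therefore $p=\sum_{k=0}^{n}f_{k}$.

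Uniqueness follows from the same differencing. If $\sum_{k=0}^{n}f_{k}=\sum_{k=0}^{n}g_{k}$ with all $F_{k}$, $G_{k}$ symmetric and $k$--additive, then applying $\Delta_{y_{1},\ldots,y_{n}}$ annihilates every term of index $<n$ and leaves $n!\,F_{n}=n!\,G_{n}$, so $F_{n}=G_{n}$; subtracting $f_{n}$ and repeating with $n-1,n-2,\ldots,0$ in place of $n$ yields $F_{k}=G_{k}$ for every $k$, in particular for the ones that are not identically zero.

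I expect the main obstacle to be the middle part of the induction: verifying that the function $F_{n}$ produced by iterated differences is genuinely symmetric and additive in each variable, and, above all, that subtracting its trace strictly lowers the degree, since this is exactly what makes the induction run. One should also bear in mind that passing from $\Delta_{y_{1},\ldots,y_{n}}p$ to $F_{n}$ requires dividing by $n!$, so the statement tacitly presupposes that $H$ is (uniquely) divisible; this is harmless in the applications made here, where $H=\mathbb{R}$.
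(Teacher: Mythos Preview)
The paper does not give its own proof of this theorem: it is stated as a known result and attributed to Kuczma \cite{Kuc09} and Sz\'{e}kelyhidi \cite{Sze91}, so there is no in-paper argument to compare against. Your proposal is the standard proof one finds in those references: extract the top-degree term via the polarization formula $F_{n}(y_{1},\ldots,y_{n})=\frac{1}{n!}\Delta_{y_{1},\ldots,y_{n}}p(x)$, verify it is well-defined, symmetric and $n$--additive, subtract its trace to lower the degree, and induct; uniqueness follows by the same differencing. The argument is correct, including your use of the identity $\Delta_{u+v}=\Delta_{u}+\Delta_{v}+\Delta_{u}\Delta_{v}$ to establish additivity in each slot, and your remark that division by $n!$ requires $H$ to be (uniquely) divisible is well taken and indeed harmless here since the paper only applies the result with $H=\mathbb{R}$.
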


During the proof of our main theorem, the basic idea is to apply two statements from 
\cite{Sze85, Sze91} concerning polynomial functions. 
Despite the fact that the following results will be used only in case the topological group is 
$\mathbb{R}$ and the topological linear space is also $\mathbb{R}$, here we present the general case. 
The reason for it is, that the problem, we investigate, can be formulated in more general circumstances. 
Perhaps in the general case the two cited theorems of L.~Sz\'{e}kelyhidi would also play an important role. 

\begin{thm}[Sz\'{e}kelyhidi \cite{Sze85, Sze91}]\label{T1.2}
 Let $G$ be an abelian group and let $X$ be a locally convex topological 
linear space. If a polynomial $p:G\rightarrow X$ is bounded on $G$, then it is 
constant. 
\end{thm}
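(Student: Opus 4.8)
The plan is to argue by induction on the degree $n$ of the polynomial $p$. If $n=0$, then by definition $\Delta_{y}p(x)=0$ for all $x,y\in G$, i.e. $p$ is constant, and there is nothing to prove. Assume the assertion holds for every bounded polynomial of degree at most $n-1$ mapping $G$ into an arbitrary locally convex topological linear space, and let $p\colon G\to X$ be a bounded polynomial of degree at most $n$.

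Fix $h\in G$ and consider the difference function $\Delta_{h}p$. Since $\Delta_{y_{1}, \ldots, y_{n+1}}p\equiv 0$, we also have $\Delta_{y_{1}, \ldots, y_{n}}\left(\Delta_{h}p\right)\equiv 0$, so $\Delta_{h}p$ is a polynomial of degree at most $n-1$; moreover it is bounded, because $\Delta_{h}p(x)=p(x+h)-p(x)$ and as $x$ runs over $G$ the values lie in $p(G)-p(G)$, which is a bounded subset of $X$ (in a topological linear space the sum, and hence the difference, of two bounded sets is bounded). By the induction hypothesis $\Delta_{h}p$ is constant, whence
\[
p(x+h)-p(x)=\Delta_{h}p(x)=\Delta_{h}p(0)=p(h)-p(0)
\qquad
\left(x, h\in G\right).
\]

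Put $q(x)=p(x)-p(0)$ for $x\in G$. The displayed identity says precisely that $q$ is additive, and $q$ is bounded since $p$ is. It remains to show that a bounded additive map $q\colon G\to X$ into a locally convex space vanishes identically. Suppose $q(x_{0})\neq 0$ for some $x_{0}\in G$. By additivity $q(mx_{0})=m\,q(x_{0})$ for every $m\in\mathbb{N}$, so $\left\{m\,q(x_{0})\colon m\in\mathbb{N}\right\}\subseteq q(G)$ is bounded. Choosing, by the Hahn--Banach theorem (this is the place where the local convexity of $X$ is used), a continuous linear functional $\Lambda$ on $X$ with $\Lambda\left(q(x_{0})\right)\neq 0$, we obtain that $\left\{m\,\Lambda\left(q(x_{0})\right)\colon m\in\mathbb{N}\right\}$ is a bounded subset of the scalar field, which is absurd. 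Hence $q\equiv 0$, that is, $p\equiv p(0)$ is constant.

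The only genuinely delicate point is this last step --- excluding nonzero bounded additive functions --- where local convexity enters through the availability of enough continuous linear functionals; the remainder is the routine observation that differencing lowers the degree while preserving boundedness, packaged into an induction.
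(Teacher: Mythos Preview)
Your argument is correct: the induction on degree via differencing, together with the Hahn--Banach step to eliminate a nonzero bounded additive map, is a clean and complete proof.

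There is nothing to compare against, however, because the paper does not prove this theorem at all. It is quoted verbatim from Sz\'{e}kelyhidi's work (references \cite{Sze85, Sze91}) as a known auxiliary tool and invoked later in the proofs of Theorem~\ref{Thm2.1} and the final linearity theorem; no argument is given in the paper itself. Your proof would serve perfectly well as a self-contained justification if one wished to insert one.
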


\begin{thm}[Sz\'{e}kelyhidi \cite{Sze85, Sze91}]\label{T1.3}
 Let $G$ be a topological abelian group which is generated by any neighbourhood of the 
zero, and let $X$ be a topological linear space, and $p:G\rightarrow X$ be a polynomial function. 
Then the following statements hold. 
\begin{enumerate}[(i)]
\item If $p:G\rightarrow X$ is continuous at a point, then it is 
continuous on $G$. 
\item Assume that $G$ is locally compact and $X$ is locally convex. 
If $p:G\rightarrow X$ is bounded on a measurable set of positive measure, 
then it is continuous. 
\item Suppose that $G$ is locally compact and $X$ is locally convex and locally bounded.  
If $p:G\rightarrow X$ is measurable on a measurable set of positive measure, 
then it is continuous. 
\end{enumerate}
\end{thm}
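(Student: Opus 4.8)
The plan is to treat all three items by induction on the degree $n$ of the polynomial function $p$, the structural backbone being Theorem~\ref{T1.1}, which lets us write $p=\sum_{k=0}^{n}f_{k}$ with $f_{k}$ the trace of a symmetric $k$--additive map $F_{k}$, together with the difference-operator identities recalled above: $\Delta_{y_{1}\ldots y_{k}}f_{k}\equiv k!\,F_{k}(y_{1},\ldots,y_{k})$, and the fact that applying $k-1$ differences to $f_{k}$ produces, up to an additive constant, the additive-in-$x$ map $x\mapsto k!\,F_{k}(y_{1},\ldots,y_{k-1},x)$. The case $n=0$ is trivial in each item, and $n=1$ reduces to the classical statements that an additive function which is continuous at a point (resp.\ bounded, resp.\ measurable, on a set of positive measure) is continuous. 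For the inductive step the common mechanism is: from the hypothesis on $p$ produce the same kind of hypothesis for every first difference $\Delta_{h}p$ (a polynomial of degree at most $n-1$) as $h$ ranges over a neighbourhood of $0$; apply the induction hypothesis to get that $\Delta_{h}p$ is continuous for all such $h$; observe that $\{h\in G:\Delta_{h}p\text{ is continuous}\}$ is a subgroup, which therefore equals $G$ because $G$ is generated by any neighbourhood of $0$; and finally recover continuity of $p$ from continuity of all its first differences together with the remaining regularity hypotheses on $X$.

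For part~(i) this scheme specialises as follows. After a translation and subtraction of $p(0)$ we may assume $p(0)=0$ and that $p$ is continuous at $0$, so $f_{0}=0$. Evaluating the $n$--fold difference of $p$ at the origin shows that $F_{n}$ is continuous at the origin of $G^{n}$; exploiting that a neighbourhood of $0$ generates $G$ (hence a neighbourhood of the origin generates $G^{n}$), one upgrades this to continuity of $F_{n}$ on all of $G^{n}$, and in particular $f_{n}$ becomes continuous. Then $p-f_{n}$ is a polynomial function of degree at most $n-1$ which is still continuous at $0$, and the induction hypothesis applies. The main obstacle is exactly this upgrade from continuity of a multi-additive object at the origin to continuity everywhere: separate or origin continuity of a multi-additive function is in general strictly weaker than joint continuity, so one must use the generation hypothesis carefully — typically by showing that the set of spans (or coordinate directions) along which the relevant difference is continuous is a subgroup containing a neighbourhood of $0$, and then feeding this back through $p=\sum_{k}f_{k}$ one degree at a time.

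For parts~(ii) and~(iii) one first localises via the Steinhaus--Weil theorem: if $p$ is bounded (resp.\ measurable) on a measurable set $E$ of positive Haar measure, then $h\mapsto\mu(E\cap(E-h))$ is continuous and positive at $0$, so $E\cap(E-h)$ has positive measure for $h$ in a neighbourhood $U$ of $0$, and on this set $\Delta_{h}p$ is bounded (resp.\ measurable) because there both $x\mapsto p(x)$ and $x\mapsto p(x+h)$ are restrictions of the given function. By the induction hypothesis each $\Delta_{h}p$, $h\in U$, is continuous, hence $\Delta_{h}p$ is continuous for every $h\in G$ by the subgroup argument. It remains to pass from ``all first differences of $p$ are continuous'' to ``$p$ is continuous'': writing $p=\sum_{k=0}^{n}f_{k}$, continuity of all $\Delta_{h}p$ forces the part $p-f_{1}$ to be continuous (its quadratic and higher pieces have separately continuous multi-additive kernels, which over a locally compact $G$ with locally convex $X$ one shows to be jointly continuous, Theorem~\ref{T1.2} being available here), so $p$ is continuous precisely when its genuinely additive part $f_{1}$ is; and, after intersecting $E$ with a compact set of positive measure, $f_{1}=p-(p-f_{1})$ is bounded (resp.\ measurable) on a set of positive measure, hence continuous by the classical $n=1$ case. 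Local convexity of $X$ in part~(ii) and local boundedness of $X$ in part~(iii) are used exactly where these one-dimensional facts and the separate-to-joint continuity step require them.
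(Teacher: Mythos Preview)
The paper does not contain a proof of this theorem: Theorem~\ref{T1.3} is quoted from Sz\'{e}kelyhidi \cite{Sze85, Sze91} and used as a black box, so there is no ``paper's own proof'' to compare your proposal against. Any comparison would have to be with Sz\'{e}kelyhidi's original arguments, which lie outside the present paper.

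That said, a brief comment on your sketch. The inductive scheme via first differences and the Steinhaus--Weil localisation is indeed the standard line of attack in \cite{Sze85, Sze91}, and your outline of parts~(ii) and~(iii) is broadly in the right spirit. However, two steps remain genuinely incomplete as written. First, in part~(i) you correctly identify the passage from continuity of $F_{n}$ at the origin of $G^{n}$ to joint continuity everywhere as the ``main obstacle'', but you do not actually carry it out; the subgroup argument you allude to handles one variable at a time and does not by itself yield joint continuity of a multi-additive map. Second, in parts~(ii) and~(iii) the step ``continuity of all $\Delta_{h}p$ forces the part $p-f_{1}$ to be continuous'' hides exactly the same separate-to-joint continuity issue for the higher multi-additive kernels, and invoking Theorem~\ref{T1.2} here is not apt (that theorem concerns global boundedness implying constancy, not separate-to-joint continuity). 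In Sz\'{e}kelyhidi's treatment these gaps are closed by a more careful polarisation argument combined with the generation hypothesis; your proposal would need to supply that argument explicitly to be complete.
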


\section{Main results}

\subsection*{Preparatory statements}

In order to avoid superfluous repetitions, henceforth we will say that the
function in question is \emph{locally regular} on its domain, if at least 
one of the following statements are fulfilled. 
\begin{enumerate}[(i)]
 \item bounded on a measurable set of positive measure;
\item continuous at a point;
\item there exists a set of positive Lebesgue measure so that the restriction of the function in question
is measurable in the sense of Lebesgue. 
\end{enumerate}

Furthermore, a function will be called \emph{globally regular}, if instead of (ii), \\
(ii)' \; continuous on its domain \\
holds.

Firstly we prove a simple lemma. 

\begin{lem}\label{L1}
 Let $\alpha\in\mathbb{R}$ be arbitrarily fixed and let us assume that for the 
function $\phi\colon ]0, +\infty[\to\mathbb{R}$ the following statements are valid. 
\begin{enumerate}[(a)]
 \item the function $\phi$ is $\mathbb{Q}$-homogeneous of order $\alpha$, that is, 
\[
 \phi(rx)=r^{\alpha}\phi(x) 
\qquad 
\left(x\in ]0, +\infty[, r\in\mathbb{Q}\cap ]0, +\infty[\right). 
\]
\item the function $\phi$ is continuous at a point. 
\end{enumerate}
Then $\phi$ is continuous everywhere. 
\end{lem}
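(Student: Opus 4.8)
The plan is to exploit the multiplicative structure hidden in the $\mathbb{Q}$-homogeneity assumption and reduce the problem to a statement about a function of a single real variable that is additive (or close to it) in a suitable coordinate. First I would fix a point $x_{0}\in\;]0,+\infty[$ at which $\phi$ is continuous, and observe that for any fixed $r\in\mathbb{Q}\cap\;]0,+\infty[$ the relation $\phi(rx)=r^{\alpha}\phi(x)$ together with continuity at $x_{0}$ gives continuity at $rx_{0}$. Thus $\phi$ is continuous at every point of the set $x_{0}\cdot(\mathbb{Q}\cap\;]0,+\infty[)$, which is dense in $]0,+\infty[$. The task is then to promote continuity on a dense set to continuity everywhere, and for this the homogeneity must be used more quantitatively: for $x$ close to an arbitrary $y$, write $x = (x/y)\,y$ and compare $\phi(x)$ with $\phi(qy)$ for a rational $q$ near $x/y$, where $\phi(qy)=q^{\alpha}\phi(y)$.

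More concretely, the cleanest route is to pass to logarithmic coordinates. Set $\psi(t)=e^{-\alpha t}\phi(e^{t})$ for $t\in\mathbb{R}$; then the $\mathbb{Q}$-homogeneity of order $\alpha$ translates into
\[
\psi(t+\log r)=\psi(t)
\qquad
\left(t\in\mathbb{R},\ r\in\mathbb{Q}\cap\;]0,+\infty[\right),
\]
so $\psi$ is periodic with every period in the group $\Gamma=\{\log r : r\in\mathbb{Q}\cap\;]0,+\infty[\}$, which is a dense subgroup of $\mathbb{R}$. Continuity of $\phi$ at a point is equivalent to continuity of $\psi$ at the corresponding point. The claim becomes: a function $\psi\colon\mathbb{R}\to\mathbb{R}$ that is $\Gamma$-periodic for a dense subgroup $\Gamma\subseteq\mathbb{R}$ and continuous at one point is constant (hence continuous everywhere), from which continuity of $\phi$ follows immediately since $\phi(x)=x^{\alpha}\psi(\log x)$ and $x\mapsto x^{\alpha}$ is continuous on $]0,+\infty[$.

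To prove the reduced claim, let $\psi$ be continuous at $t_{0}$ and let $t\in\mathbb{R}$ be arbitrary. Choose $\gamma_{k}\in\Gamma$ with $\gamma_{k}\to t-t_{0}$; then $t_{0}+\gamma_{k}\to t$, and by periodicity $\psi(t_{0}+\gamma_{k})=\psi(t_{0})$. Given $\varepsilon>0$, continuity at $t_{0}$ supplies a $\delta>0$ with $|\psi(s)-\psi(t_{0})|<\varepsilon$ whenever $|s-t_{0}|<\delta$. For any $s$ with $|s-t|<\delta/2$ pick $k$ large enough that $|\gamma_{k}-(t-t_{0})|<\delta/2$; then $|s-\gamma_{k}-t_{0}|<\delta$, so using periodicity again, $|\psi(s)-\psi(t_{0})|=|\psi(s-\gamma_{k})-\psi(t_{0})|<\varepsilon$. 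Hence $\psi$ is continuous at $t$ with value $\psi(t_{0})$, so $\psi$ is constant. I expect the only real subtlety to be the bookkeeping in this last density argument — making sure that one shifts by a fixed rational translate $\gamma_{k}$ on a whole neighbourhood of $t$ at once, rather than picking a different translate for each point, so that genuine continuity (not merely sequential continuity along rationals) is obtained; everything else is routine translation between multiplicative and additive coordinates.
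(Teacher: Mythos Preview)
Your argument is correct, and in fact both your proof and the paper's arrive at the same stronger conclusion that $\phi(x)=Cx^{\alpha}$ for some constant $C$, though by different packaging. The paper stays in multiplicative coordinates throughout: given continuity at $x_{0}$ and an arbitrary $\tilde{x}$, it picks positive rationals $r_{n}\to \tilde{x}/x_{0}$, uses $\mathbb{Q}$-homogeneity to write $\phi(\tilde{x}/r_{n})=r_{n}^{-\alpha}\phi(\tilde{x})$, and passes to the limit using continuity at $x_{0}$ to obtain $(x_{0}/\tilde{x})^{\alpha}\phi(\tilde{x})=\phi(x_{0})$ directly; this is already $\phi(\tilde{x})=(\tilde{x}/x_{0})^{\alpha}\phi(x_{0})$, hence full real homogeneity and continuity. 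Your route via $\psi(t)=e^{-\alpha t}\phi(e^{t})$ recasts the problem as showing that a function invariant under translation by a dense subgroup of $\mathbb{R}$ and continuous at one point must be constant, which you then handle with an explicit $\varepsilon$--$\delta$ shift argument. The paper's version is shorter and avoids the change of variables; yours makes the underlying mechanism (dense periods plus one-point continuity force constancy) more visible and would adapt verbatim to other situations where the scaling group is replaced by any dense subgroup of the reals.
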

\begin{proof}
 Let us assume that the function $\phi$ is continuous at the point $x_{0}\in ]0, +\infty[$ and let 
$\tilde{x}\in ]0, +\infty[$ be arbitrary. 
Then, there exists a sequence of positive rational numbers $(r_{n})_{n\in\mathbb{N}}$ so that 
$\lim_{n\to\infty} r_{n}= \dfrac{\tilde{x}}{x_{0}}$. In this case 
the sequence $\left(\frac{1}{r_{n}}\right)_{n\in\mathbb{N}}$ is also a sequence of positive rational numbers and it 
converges to $\dfrac{x_{0}}{\tilde{x}}$. 
Due to property (a), 
\[
\frac{1}{r_{n}^{\alpha}}\phi(\tilde{x})=\phi\left(\frac{\tilde{x}}{r_{n}}\right) 
\qquad 
\left(n\in\mathbb{N}\right). 
\]
Taking the limit $n\to \infty$, the left hand side converges to 
$\left(\frac{x_{0}}{\tilde{x}}\right)^{\alpha}\phi(\tilde{x})$. Furthermore, 
the sequence $\left(\frac{\tilde{x}}{r_{n}}\right)_{n\in\mathbb{N}}$ converges to 
$x_{0}$, therefore the continuity of the function $\phi$ implies that 
the right hand side tends to $\phi(x_{0})$ as $n\to \infty$. 
This implies that 
\[
 \left(\frac{x_{0}}{\tilde{x}}\right)^{\alpha}\phi(\tilde{x})=\phi\left(\frac{x_{0}}{\tilde{x}}\tilde{x}\right)
\]
is fulfilled. 
Since $\tilde{x}\in ]0, +\infty[$ was arbitrary, we get that 
\[
 \phi(\lambda x)=\lambda^{\alpha}\phi(x) 
\qquad 
\left(\lambda, x\in ]0, +\infty[\right), 
\]
which obviously implies the (everywhere) continuity of the function $\phi$. 

\end{proof}

Furthermore, it is important pointing out the following fact. 
Fix $\alpha\in\mathbb{R}$ and let $\phi\colon ]0, +\infty[\to\mathbb{R}$ 
be a $\mathbb{Q}$-homogeneous function of order $\alpha$. 
Suppose that $\phi$ fulfils property (i) or (iii) on the set of positive Lebesgue measure 
$D$. 
Then, the $\mathbb{Q}$-homogeneity of the function $\phi$ implies that 
(i), respectively (iii) holds for the function $\phi$ on the set 
$r\cdot D$ for all $r\in\mathbb{Q}$. 

\begin{thm}\label{Thm2.1}
 Let $n, m\in\mathbb{Z}\setminus\left\{0\right\}$, $n\neq m$ so that 
$n=-m$ or $\mathrm{sign}(n)=\mathrm{sign}(m)$, let further 
$f, g\colon\mathbb{R}\to\mathbb{R}$ be additive functions. 
Define the function $\phi\colon \mathbb{R}\setminus\left\{0\right\}\to\mathbb{R}$ by the 
formula 
\[
 \phi(x)=f\left(x^{n}\right)-x^{n-m}g\left(x^{m}\right) 
\qquad 
\left(x\in\mathbb{R}\setminus\left\{0\right\}\right), 
\]
and assume that $\phi$ is locally regular. 
Then, the functions $F, G\colon\mathbb{R}\to\mathbb{R}$ defined by 
\[
 F(x)=f(x)-f(1)x \quad 
\text{and}
\quad 
G(x)=g(x)-g(1)x
\qquad 
\left(x\in\mathbb{R}\right)
\]
are derivations and 
\[
 nF(x)=mG(x)
\]
holds for arbitrary $x\in\mathbb{R}$. 
\end{thm}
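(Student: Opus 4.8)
The plan is to deduce from the local regularity of $\phi$ that $\phi(x)=\phi(1)x^{n}$, then to absorb $\phi(1)$ into $f$ and quote Theorem~\ref{T1}; the case $n=-m$ needs a small twist. The basic algebraic fact is that $\phi$ is $\mathbb{Q}$--homogeneous of order $n$: replacing $x$ by $rx$ with $r\in\mathbb{Q}\setminus\{0\}$ and using that additive functions are $\mathbb{Q}$--homogeneous,
\[
\phi(rx)=r^{n}f(x^{n})-r^{n-m}x^{n-m}\cdot r^{m}g(x^{m})=r^{n}\phi(x)
\qquad\left(x\in\mathbb{R}\setminus\{0\},\ r\in\mathbb{Q}\setminus\{0\}\right).
\]
With $r=-1$ this reduces the analysis to $]0,+\infty[$. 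If $\phi$ is continuous at a point, Lemma~\ref{L1} shows $\phi|_{]0,+\infty[}$ is continuous everywhere, hence — by the last display in the proof of that lemma, with $\alpha=n$ — $\phi(x)=\phi(1)x^{n}$ for $x>0$, and the homogeneity extends this to all $x\neq 0$. From now on we may therefore assume $\phi$ is bounded, resp.\ measurable, on a set of positive measure.

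Assume first $\mathrm{sign}(n)=\mathrm{sign}(m)$. Composing with the homeomorphism $x\mapsto 1/x$ when $n,m<0$, we may take $n,m>0$. Set $\ell=0$ if $n>m$ and $\ell=m-n$ if $n<m$; then $x^{\ell}\phi(x)$ is, on $\mathbb{R}\setminus\{0\}$, a finite sum of products of monomials $x\mapsto x^{j}$, $j\geq 0$, with maps of the form $x\mapsto h(x^{k})$, $k\geq 1$, $h\in\{f,g\}$. Since $(x_{1},\dots,x_{k})\mapsto x_{1}\cdots x_{k}$ is symmetric and $k$--additive and $h$ is additive, each such map is the trace of a symmetric $k$--additive function, hence a polynomial function by Theorem~\ref{T1.1}; as sums and products of polynomial functions remain polynomial functions, $x^{\ell}\phi$ agrees on $\mathbb{R}\setminus\{0\}$ with a polynomial function $P\colon\mathbb{R}\to\mathbb{R}$. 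After shrinking the exceptional set to a compact subset of $\mathbb{R}\setminus\{0\}$ — so that neither multiplication by $x^{\ell}$ nor the map $x\mapsto 1/x$ destroys the regularity — we find that $P$ is bounded, resp.\ measurable, on a set of positive measure, so Theorem~\ref{T1.3} forces $P$, and with it $\phi$, to be continuous on $\mathbb{R}\setminus\{0\}$; now Lemma~\ref{L1} and the homogeneity give $\phi(x)=\phi(1)x^{n}$ as before.

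The case $n=-m$ is genuinely different, since $\phi$ need not be comparable with any polynomial function. Taking $n>0$ (the case $n<0$ reduces to it by $x\mapsto 1/x$), so $\phi(x)=f(x^{n})-x^{2n}g(x^{-n})$, I would symmetrize: with $p=f-g$ and $\sigma=f+g$ one computes
\[
\phi(x)+x^{2n}\phi(1/x)=p(x^{n})+x^{2n}p(x^{-n}),
\qquad
\phi(x)-x^{2n}\phi(1/x)=\sigma(x^{n})-x^{2n}\sigma(x^{-n}),
\]
and, in the variable $y=x^{n}$ on $]0,+\infty[$, the right-hand sides become $q(y)=p(y)+y^{2}p(1/y)$ and $\rho(y)=\sigma(y)-y^{2}\sigma(1/y)$, both $\mathbb{Q}$--homogeneous of order $1$, with $q(1/y)=y^{-2}q(y)$ and $\rho(1/y)=-y^{-2}\rho(y)$. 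Assuming for the moment that the regularity of $\phi$ can be propagated to show $q$ and $\rho$ continuous — whence $q(y)=q(1)y$, $\rho(y)=\rho(1)y$ by Lemma~\ref{L1} — we conclude as follows. From $\rho(1)=\sigma(1)-\sigma(1)=0$ we get $\sigma(1/y)=y^{-2}\sigma(y)$, so the additive function $\sigma=f+g$ is linear by the Jurkat--Kurepa theorem, i.e.\ $F+G\equiv 0$. Since $q(1)=2p(1)$, writing $p=f-g$ the identity $q(y)=q(1)y$ rearranges to $E(1/y)=-y^{-2}E(y)$ for $E:=F-G$; together with $F=-G$ this reads $F(1/y)=-y^{-2}F(y)$, i.e.\ $F$ satisfies $(\ast)$ with $k=-1$, so $F$ — and hence $G=-F$ — is a derivation, and $nF=mG$ because $m=-n$.

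It remains, whenever we have arrived at $\phi(x)=\phi(1)x^{n}$, to put $\widetilde f:=f-\phi(1)\cdot\mathrm{id}$: this additive function satisfies $\widetilde f(x^{n})=x^{n-m}g(x^{m})$ for every $x\neq 0$, so Theorem~\ref{T1} applies to $\widetilde f$ and $g$ and yields that $\widetilde F(x)=\widetilde f(x)-\widetilde f(1)x$ and $G(x)=g(x)-g(1)x$ are derivations with $n\widetilde F=mG$; and $\widetilde F=F$, which is the assertion. The main obstacle I anticipate is precisely the regularity step in the case $n=-m$: no polynomial function is available for Theorem~\ref{T1.3}, so the implication ``boundedness or measurability of $q$ and $\rho$ on a set of positive measure $\Rightarrow$ continuity'' must be established directly for $\mathbb{Q}$--homogeneous functions, presumably by combining the remark following Lemma~\ref{L1} with the fact that a measurable set invariant under a dense group of dilations is null or of full measure, plus an argument upgrading ``almost everywhere'' to ``everywhere'' via the homogeneity.
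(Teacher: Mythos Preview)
Your treatment of the same-sign case is correct and essentially matches the paper's: both produce a polynomial function (you via $x^{\ell}\phi$, the paper via an explicit symmetric $n$-additive $\Phi$ with trace $\phi$, after assuming without loss of generality that $n>m$), invoke Theorem~\ref{T1.3} to get continuity, and then feed the resulting identity $\phi(x)=\phi(1)x^{n}$ into Theorem~\ref{T1}.

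The case $n=-m$, however, has a genuine gap --- the one you yourself flag. Your functions $q(y)=p(y)+y^{2}p(1/y)$ and $\rho(y)=\sigma(y)-y^{2}\sigma(1/y)$ are $\mathbb{Q}$-homogeneous of order $1$ but are \emph{not} polynomial functions, so Theorem~\ref{T1.3} is unavailable. The ergodicity route you sketch yields at best $q(y)=q(1)y$ and $\rho(y)=0$ \emph{almost everywhere}: writing $q(y)/y=\psi(y)$ makes $\psi$ constant on $\mathbb{Q}^{+}$-orbits, and an invariant measurable function is a.e.\ constant, but a null set can itself be a union of such orbits, so ``a.e.'' does not upgrade to ``everywhere'' from homogeneity alone. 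The additive structure of $p$ and $\sigma$ does not obviously rescue this step.

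The paper closes this gap by a different reduction that \emph{does} manufacture a polynomial function. Taking $m>0$ and setting $\psi(x)=\phi(x^{1/m})=f(1/x)-x^{-2}g(x)$ for $x>0$, one substitutes $x=u(u+1)$ and uses the partial-fraction identity $\dfrac{1}{u(u+1)}=\dfrac{1}{u}-\dfrac{1}{u+1}$ together with additivity of $f$ and $g$ to obtain
\[
u^{2}(u+1)^{2}\bigl[\psi(u(u+1))-\psi(u)+\psi(u+1)\bigr]+u^{2}g(1)=2u\,g(u)-g(u^{2}).
\]
The right-hand side \emph{is} a polynomial function of degree $2$, so the already-proved same-sign case applies (with $n=2$, $m=1$, $f=g$) and gives that $G(x)=g(x)-g(1)x$ is a derivation. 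A short bootstrap then shows $F=-G$, whence $nF=mG$. The key idea you are missing is precisely this $u(u+1)$ substitution, which eliminates the $f(1/\cdot)$ terms and leaves a genuine polynomial expression in $g$ alone.
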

\begin{proof}
 Concerning the values of $n$ and $m$ we have to distinguish three cases. 
At first, let us assume that $n, m>0$.
There is no loss of generality in assuming $n>m$. 
Define the function $\Phi$ on $\mathbb{R}^{n}$ by 
\begin{multline*}
 \Phi(x_{1}, \ldots, x_{n})=
f(x_{1}\cdots x_{n}) - 
\frac{1}{\binom{n}{m}} 
\sum_{\mathrm{card}(I)=m}
\left( \prod_{j \in \{1,2,\dots,n\} \setminus I} x_j \right)
g \left( \prod_{i \in I} x_i \right) 
\\
\left(x_{1}, \ldots, x_{n}\in\mathbb{R}\right), 
\end{multline*}
where the summation is considered for all subsets $I$ of cardinality $m$ of the index set 
$\left\{1, 2,\ldots, n\right\}$. 
Due to the additivity of the functions $f$ and $g$, the function 
$\Phi$ is a symmetric and $n$--additive function. Furthermore, its trace, that is, 
\[
 \Phi(x, \ldots, x)=\phi(x)=f\left(x^{n}\right)-x^{n-m}g\left(x^{m}\right) 
\qquad 
\left(x\in\mathbb{R}\right)
\]
is a polynomial function. On the other hand $\phi$ is a locally regular function.  
In view of Theorem \ref{T1.3}, this means that $\phi$ is a continuous polynomial function. 
Therefore, there exists $c\in\mathbb{R}$ such that 
\[
 \Phi(x_{1}, \ldots, x_{n})=cx_{1}\cdots x_{n} 
\quad
\left(x_{1}, \ldots, x_{n}\in\mathbb{R}\right), 
\]
that is, 
\[
 \phi(x)=cx^{n} 
\quad 
\left(x\in\mathbb{R}\right). 
\]
With the substitution $x=1$, we get $\phi(1)=c$. 
On the other hand, the definition of the function $\phi$ yields that 
$\phi(1)=f(1)-g(1)$. 
Thus, 
\[
 f(x^{n})-x^{n-m}g(x^{m})=\left[f(1)-g(1)\right]x^{n} 
\qquad 
\left(x\in\mathbb{R}\right). 
\]
Define the functions $F, G\colon \mathbb{R}\to\mathbb{R}$ by 
\[
 F(x)=f(x)-f(1)x 
\quad 
\text{and}
\quad 
G(x)=g(x)-g(1)x 
\qquad 
\left(x\in\mathbb{R}\right). 
\]
Then the above identity yields that
\[
 F\left(x^{n}\right)=x^{n-m}G(x^{m}) 
\qquad 
\left(x\in\mathbb{R}\right). 
\]
The statement of the theorem follows now from Theorem \ref{T1}. 

Secondly, let us assume that $n, m<0$. In this case we get that the function 
\[
 \phi(x)=f\left(x^{n}\right)-x^{n-m}g\left(x^{m}\right) 
\qquad 
\left(x\in\mathbb{R}\setminus\left\{0\right\}\right)
\]
is locally regular on its domain. Let $u\in\mathbb{R}\setminus\left\{0\right\}$, with the substitution $x=\frac{1}{u}$ this 
yields that 
\[
 \phi\left(\frac{1}{u}\right)=
f\left(u^{-n}\right)-u^{-(n-m)}g\left(u^{-m}\right) 
\qquad 
\left(u\in\mathbb{R}\setminus\left\{0\right\}\right). 
\]
Since $-n, -m>0$, the results of the previous case can be applied for the function 
\[
 \psi(u)=\phi\left(\frac{1}{u}\right) 
\qquad 
\left(u\in \mathbb{R}\setminus \left\{0\right\}\right), 
\]
which is, due to the local regularity of $\phi$, also locally regular. 

Finally, let us assume that $n=-m$. Without the loss of generality $m>0$ can be assumed. 
In this case 
\[
 \phi(x)=f\left(x^{-m}\right)-x^{-2m}g\left(x^{m}\right) 
\qquad 
\left(x\in\mathbb{R}\setminus\left\{0\right\}\right)
\]
is locally regular, or equivalently, the mapping 
\begin{equation}\label{Eq1}
\psi(x)= \phi\left(\sqrt[m]{x}\right)=f\left(\frac{1}{x}\right)-\frac{1}{x^{2}}g(x) 
\qquad 
\left(x>0\right)
\end{equation}
has the local regularity property. 
Let $u>0$ be arbitrary and  let us substitute $u(u+1)$ in place of $x$. Then 
\begin{multline*}
\psi(u(u+1))= \phi\left(\sqrt[m]{u(u+1)}\right)
\\
=f\left(\frac{1}{u(u+1)}\right)-\frac{1}{u^{2}(u+1)^{2}}g\left(u(u+1)\right)
\\
\left(u>0\right). 
\end{multline*}
Using the additivity of the function $f$, 
\begin{multline*}
\psi(u(u+1))=\phi\left(\sqrt[m]{u(u+1)}\right)
\\
=f\left(\frac{1}{u}\right)-f\left(\frac{1}{u+1}\right) -\frac{1}{u^{2}(u+1)^{2}}g\left(u(u+1)\right)
\\
\left(u>0\right). 
\end{multline*}
On the other hand, 
\[
\psi(u)=\phi\left(\sqrt[m]{u}\right)=f\left(\frac{1}{u}\right)-\frac{1}{u^{2}}g(u) 
\qquad 
\left(u>0\right)
\]
and 
\[
\psi(u+1)=\phi\left(\sqrt[m]{u+1}\right)=f\left(\frac{1}{u+1}\right)-\frac{1}{(u+1)^{2}}g(u+1) 
\qquad 
\left(u>0\right). 
\]
Therefore, 
\begin{multline*}
\psi(u(u+1))-\psi(u)+\psi(u+1)
\\
=
\phi\left(\sqrt[m]{u(u+1)}\right)-\phi(\sqrt[m]{u})+\phi(\sqrt[m]{u+1})
\\
=
f\left(\frac{1}{u}\right)-f\left(\frac{1}{u+1}\right) -\frac{1}{u^{2}(u+1)^{2}}g\left(u(u+1)\right)
\\
-f\left(\frac{1}{u}\right)+\frac{1}{u^{2}}g(u)+
f\left(\frac{1}{u+1}\right)-\frac{1}{(u+1)^{2}}g(u+1) 
\qquad 
\left(u>0\right)
\end{multline*}
Making use of the additivity of the function g, after rearrangement, we obtain that 
\[
 \chi(u)=2ug(u)-g\left(u^{2}\right) 
\qquad 
\left(u>0\right), 
\]
where 
\[
 \chi(u)=u^{2}(u+1)^{2}\left[\psi\left(u(u+1)\right)-\psi(u)+\psi(u+1)\right]+u^{2}g(1) 
\qquad 
(u>0). 
\]
By our assumptions, the function $\phi$  is locally regular on 
$\mathbb{R}\setminus\left\{0\right\}$ and due to the additivity of $f$ and $g$, 
it is $\mathbb{Q}$-homogeneous of order $n$. Thus, by Lemma \ref{L1}, 
$\phi$ is globally regular on $\mathbb{R}\setminus \left\{0\right\}$. 
This implies that $\psi$ is globally regular on $]0, +\infty[$, which means that 
$\chi$ is locally regular. 
Due to the results of the first case this yields that the function 
$G\colon\mathbb{R}\to\mathbb{R}$ defined by 
\[
 G(x)=g(x)-g(1)x 
\qquad 
\left(x\in\mathbb{R}\right)
\]
is a derivation. 
In view of \eqref{Eq1}, this implies that 
\[
\phi\left(\sqrt[m]{x}\right)=f\left(\frac{1}{x}\right)-\frac{1}{x^{2}}\left[G(x)+g(1)x\right] 
\qquad 
\left(x>0\right), 
\]
that is , 
\[
\phi\left(\sqrt[m]{x}\right)=f\left(\frac{1}{x}\right)+G\left(\frac{1}{x}\right)+g(1)\frac{1}{x} 
\qquad 
\left(x>0\right), 
\]
since $G$ is a derivation. 
Let $u>0$, with the substitution $x=\frac{1}{u}$ we get that 
\[
\psi(u)=\phi\left(\sqrt[m]{\frac{1}{u}}\right)=f(u)+G(u)+g(1)u
\qquad 
\left(u>0\right). 
\]
Let us observe that the right hand side of this identity is an additive function, being the sum of additive functions. 
Moreover, the left hand side is locally regular, due to the local regularity of $ \phi$. 
Thus $\psi$ is a regular additive function, which means that there exists $c\in\mathbb{R}$ so that 
\[
 f(u)+G(u)+g(1)u=cu 
\qquad 
\left(u\in\mathbb{R}\right). 
\]
With $u=1$, $c=f(1)+g(1)$ can be obtained, therefore, 
\[
 f(u)=\left[f(1)+g(1)\right]u-g(1)u+G(u) 
\qquad 
\left(u\in\mathbb{R}\right), 
\]
i.e., 
\[
 f(u)=-G(u)+f(1)u
\qquad 
\left(u\in\mathbb{R}\right). 
\]
This means that the function $F\colon \mathbb{R}\to\mathbb{R}$ defined by 
\[
 F(x)=f(x)-f(1)x 
\qquad 
\left(x\in\mathbb{R}\right)
\]
is a derivation and 
\[
 F(x)=-G(x) 
\qquad 
\left(x\in\mathbb{R}\right)
\]
holds. 
\end{proof}

\begin{lem}\label{L2.1}
 Let $\kappa\in\mathbb{R}$, $n, m\in\mathbb{Z}$, $n \neq m$ and assume that 
$f\colon\mathbb{R}\to\mathbb{R}$ is an additive function. 
Define the function $\phi\colon\mathbb{R}\setminus\left\{0\right\}\to\mathbb{R}$ by 
\[
 \phi(x)=f\left(x^{n}\right)-\kappa x^{n-m}f\left(x^{m}\right) 
\qquad 
\left(x\in\mathbb{R}\setminus \left\{0\right\}\right)
\]
and assume that $\phi$ is locally regular. 
Then, the function $F\colon\mathbb{R}\to\mathbb{R}$ defined by 
\[
 F(x)=f(x)-f(1)x \quad 
\left(x\in\mathbb{R}\setminus\left\{0\right\}\right)
\]
is a derivation so that for any $x\in\mathbb{R}$ 
\[
\left(n-\kappa m\right)F(x)=0. 
\]
\end{lem}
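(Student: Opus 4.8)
The plan is to reduce the assertion to Theorem \ref{Thm2.1}, applied with the additive function $g:=\kappa f$. For this choice $G(x)=g(x)-g(1)x=\kappa F(x)$, so Theorem \ref{Thm2.1} is directly available whenever $n=-m$ or $\mathrm{sign}(n)=\mathrm{sign}(m)$ (with $n,m\neq 0$): in the second case it gives that $F$ is a derivation with $nF=mG=\kappa m F$, which is exactly the claim, and in the case $n=-m$ it gives that $F$ is a derivation with $F=-G=-\kappa F$, so $(1+\kappa)F\equiv 0$ and therefore $(n-\kappa m)F=n(1+\kappa)F\equiv 0$. What is genuinely new is the situation where $n$ and $m$ have opposite signs but $n\neq -m$; Theorem \ref{Thm2.1} does not apply there, and essentially all the work goes into this case.

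First I would observe that, by the additivity of $f$, the function $\phi$ is $\mathbb{Q}$-homogeneous of order $n$ on $]0,+\infty[$ (and $\phi(-x)=(-1)^{n}\phi(x)$), so by Lemma \ref{L1} (together with the observation following it) its assumed local regularity upgrades to continuity of $\phi$ on $]0,+\infty[$. Then, rewriting the defining relation as $f(x^{n})=\kappa x^{n-m}f(x^{m})+\phi(x)$, substituting successively $x\mapsto x^{n}$ and $x\mapsto x^{m}$, and eliminating the common term $f(x^{nm})$, I would obtain
\[
f(x^{n^{2}})-\kappa^{2}x^{\,n^{2}-m^{2}}f(x^{m^{2}})=\widetilde{\phi}(x),\qquad \widetilde{\phi}(x):=\phi(x^{n})+\kappa x^{\,n(n-m)}\phi(x^{m})\quad (x\neq 0).
\]
Since $\phi$ is continuous on $]0,+\infty[$ and the maps $x\mapsto x^{n},x^{m},x^{n(n-m)}$ are continuous there, $\widetilde{\phi}$ is continuous on $]0,+\infty[$, hence locally regular; and since $n^{2},m^{2}$ are nonzero, positive and — as $n\neq\pm m$ — distinct, Theorem \ref{Thm2.1} applied with exponents $n^{2},m^{2}$ and additive functions $f$, $\kappa^{2}f$ yields that $F(x)=f(x)-f(1)x$ is a derivation. (In the remaining case $n=-m$ the same conclusion was already obtained above from the direct use of Theorem \ref{Thm2.1}; thus $F$ is a derivation whenever $n,m\neq 0$.)

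Once $F$ is known to be a derivation, the rest is uniform. Writing $f(x)=F(x)+f(1)x$ and using $(\ast)$ for the derivation $F$, one computes, for every $x\in\mathbb{R}\setminus\{0\}$,
\[
\phi(x)=(n-\kappa m)\,x^{\,n-1}F(x)+(1-\kappa)f(1)\,x^{n}.
\]
If $n=\kappa m$ then $(n-\kappa m)F\equiv 0$ trivially. Otherwise, solving this identity for $F(x)$ exhibits $F$ on $]0,+\infty[$ as a quotient of continuous functions (the continuity of $\phi$ is used once more), so $F$ is continuous at a point; an additive function continuous at a point is linear, so $F(x)=F(1)x$, and $F(1)=f(1)-f(1)=0$ forces $F\equiv 0$ — again $(n-\kappa m)F\equiv 0$. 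Finally, the degenerate cases $n=0$ or $m=0$, in which one of the two power terms is merely a constant multiple of $x^{n}$, are settled by the symmetric multi-additive argument from the first case of Theorem \ref{Thm2.1}: $\phi$ is there the trace of a symmetric multi-additive function, local regularity forces $\phi(x)=cx^{n}$, and evaluating at $x=1$ forces $F$ to vanish on all powers $x^{\ell}$ with $\ell$ the nonzero one of $n,m$, whence $F\equiv 0$.

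I expect the opposite-sign case $n\neq -m$ to be the main obstacle: Theorem \ref{Thm2.1} is of no direct use there, and the remedy — passing to the squared exponents $(n^{2},m^{2})$ — works only after $\widetilde{\phi}$ has been shown to be regular, which is precisely why the local regularity of $\phi$ must first be promoted to genuine continuity by means of Lemma \ref{L1}.
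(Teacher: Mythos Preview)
Your argument is correct and follows essentially the same route as the paper: reduce the remaining case $\mathrm{sign}(n)\neq\mathrm{sign}(m)$, $n\neq -m$ to Theorem~\ref{Thm2.1} via the identity $\phi(x^{n})+\kappa x^{n(n-m)}\phi(x^{m})=f(x^{n^{2}})-\kappa^{2}x^{n^{2}-m^{2}}f(x^{m^{2}})$, after using the $\mathbb{Q}$-homogeneity of $\phi$ and Lemma~\ref{L1} to propagate regularity. Your write-up is in fact more complete than the paper's on two points the paper leaves implicit: the degenerate cases $n=0$ or $m=0$ (allowed by the hypothesis $n,m\in\mathbb{Z}$), and the explicit deduction of $(n-\kappa m)F\equiv 0$ from the derivation property of $F$ by substituting back into $\phi$.
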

\begin{proof}
 In view of the previous theorem, it is enough to deal with the case $\mathrm{sign}(n)\neq\mathrm{sign}(m)$ and $n\neq -m$. 
Due to the definition of the function $\phi$ 
\[
 \phi\left(x^{n}\right)=f\left(x^{n^{2}}\right)-\kappa x^{n(n-m)}f\left(x^{nm}\right)  
\qquad 
\left(x\in\mathbb{R}\setminus\left\{0\right\}\right)
\]
and 
\[
 \phi\left(x^{m}\right)=f\left(x^{nm}\right)-\kappa x^{m(n-m)}f\left(x^{m^{2}}\right) 
\qquad 
\left(x\in\mathbb{R}\setminus\left\{0\right\}\right), 
\]
therefore 
\[
 \phi\left(x^{n}\right)+\kappa x^{n(n-m)}\phi\left(x^{m}\right)=
f\left(x^{n^{2}}\right)-\kappa^{2} x^{n^{2}-m^{2}}f\left(x^{m^{2}}\right) 
\qquad 
\left(x\in\mathbb{R}\setminus\left\{0\right\}\right). 
\]
By our assumptions, $\phi$ is a locally regular function on 
$\mathbb{R}\setminus\left\{0\right\}$. 
However, the additivity of $f$ implies that 
\[
 \phi(rx)=r^{n}\phi(x) 
\qquad 
\left(x\in\mathbb{R}\setminus\left\{0\right\}, r\in\mathbb{Q}\setminus \left\{0\right\}\right). 
\]
Using Lemma \ref{L1}, we get that $\phi$ is globally regular. 
Therefore, the function
\[
 \psi(x)=\phi\left(x^{n}\right)+\kappa x^{n(n-m)}\phi\left(x^{m}\right) 
\qquad 
\left(x\in\mathbb{R}\setminus\left\{0\right\}\right)
\]
is locally regular. 
Since $n^{2}, m^{2}>0$ and $n^{2}\neq m^{2}$, the results of the previous theorem can be applied 
(with the choice $\psi(x)=\phi\left(x^{n}\right)+\kappa x^{n(n-m)}\phi\left(x^{m}\right)$ and $g(x)=\kappa^{2}f(x)$) 
to obtain that 
\[
 f(x)=F(x)+f(1)x 
\qquad 
\left(x\in\mathbb{R}\right), 
\]
where $F\colon \mathbb{R}\to\mathbb{R}$ is a derivation and 
\[
 nF(x)=m\kappa F(x)
\]
is also fulfilled for all $x\in\mathbb{R}$.

\end{proof}

From this lemma, the following corollary can be concluded immediately. 

\begin{cor}\label{C2.1}
 Let $r\in\mathbb{Q}\setminus\left\{0, 1\right\}$ be arbitrarily fixed and 
$f\colon\mathbb{R}\to\mathbb{R}$ be an additive function and define the function
 by
\[
 \phi(x)=f\left(x^{r}\right)-rx^{r-1}f\left(x\right)
\qquad 
\left(x\in\mathbb{R},\, x>0\right), 
\]
and assume that $\phi$ is locally regular. 
Then, the function $F\colon\mathbb{R}\to\mathbb{R}$ defined by 
\[
 F(x)=f(x)-f(1)x \quad 
\left(x\in\mathbb{R}\right)
\]
is a derivation. 
\end{cor}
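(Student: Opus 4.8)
The plan is to remove the fractional exponent by a substitution and then to appeal to Lemma~\ref{L2.1}. Write $r=\frac{n}{m}$ in lowest terms, with $n\in\mathbb{Z}\setminus\{0\}$ and $m\in\mathbb{N}$; since $r\neq 1$ we have $n\neq m$. For $y>0$ put $x=y^{m}$, so that $x^{r}=y^{n}$ and $x^{r-1}=y^{n-m}$, and consider
\[
\psi(y)=\phi\!\left(y^{m}\right)=f\!\left(y^{n}\right)-\frac{n}{m}\,y^{n-m}f\!\left(y^{m}\right)\qquad(y>0).
\]
The right-hand side makes sense for every $y\in\mathbb{R}\setminus\{0\}$, and, using the additivity of $f$ (so that $f\!\left(q^{n}y^{n}\right)=q^{n}f\!\left(y^{n}\right)$ and $f\!\left(q^{m}y^{m}\right)=q^{m}f\!\left(y^{m}\right)$ for all nonzero rationals $q$), the so-extended function $\psi$ is $\mathbb{Q}$-homogeneous of order $n$ on $\mathbb{R}\setminus\{0\}$. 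Moreover $\psi$ has precisely the form appearing in Lemma~\ref{L2.1}, with integer exponents $n$ and $m$ and with $\kappa=r=\frac{n}{m}$.

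The next step is to check that $\psi$ inherits local regularity from $\phi$. The substitution is governed by the homeomorphism $\sigma(y)=y^{m}$ of $]0,+\infty[$ onto itself, which is bi-Lipschitz on every compact subinterval of $]0,+\infty[$. Continuity of $\phi$ at a point of its domain carries over to $\psi=\phi\circ\sigma$ immediately. If $\phi$ is bounded, respectively measurable, on a set $D$ of positive measure, then $D$ already has positive measure in some compact subinterval $J\subset\,]0,+\infty[$; since $\sigma|_{J}$ is Lipschitz, $\sigma^{-1}(D\cap J)$ has positive measure, and $\psi$ is bounded, respectively measurable, there. Hence $\psi$ is locally regular on $]0,+\infty[$, and then, by its $\mathbb{Q}$-homogeneity together with Lemma~\ref{L1} and the remark following it, $\psi$ is locally regular on all of $\mathbb{R}\setminus\{0\}$, so that Lemma~\ref{L2.1} is literally applicable to it.

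Applying Lemma~\ref{L2.1} now yields that $F(x)=f(x)-f(1)x$ is a derivation and that $(n-\kappa m)F(x)=0$ for all $x\in\mathbb{R}$. Since $\kappa m=\frac{n}{m}\cdot m=n$, the second conclusion reads $0=0$ and carries no information; what remains is precisely the assertion that $F$ is a derivation, which is what had to be shown. The only part of the argument that is not routine bookkeeping is the transfer of the regularity hypothesis across $y\mapsto y^{m}$, and there the sole genuine point is the measure-theoretic one, which is dealt with by restricting to a compact subinterval on which the power map is bi-Lipschitz.
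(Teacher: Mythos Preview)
Your proof is correct and follows exactly the route the paper intends: the paper states that the corollary ``can be concluded immediately'' from Lemma~\ref{L2.1}, and the substitution $x=y^{m}$ you perform is precisely the way to make that reduction explicit. Your treatment of the regularity transfer through the power map is more careful than the paper's terse remark, but note that once $\psi$ is locally regular on $]0,+\infty[$ it is already locally regular on $\mathbb{R}\setminus\{0\}$ in the sense required by Lemma~\ref{L2.1} (the regularity hypotheses are existential, not universal), so the appeal to Lemma~\ref{L1} at that stage is harmless but not strictly needed.
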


\subsection*{Derivations along rational functions}

In view of the results of the previous subsection, we are able to prove the following

\begin{thm}\label{T2.2}
 Let $n\in\mathbb{Z}\setminus\left\{0\right\}$ and 
$\left(\begin{array}{cc}
a&b\\
c&d
\end{array}
\right)\in\mathbf{GL}_{2}(\mathbb{Q})$ be such that 
\begin{enumerate}[--]
 \item if $c=0$, then $n\neq 1$;
\item if $d=0$, then $n\neq -1$. 
\end{enumerate}
Let further $f, g\colon\mathbb{R}\to\mathbb{R}$ be additive functions and define the function 
$\phi$ by 
\[
 \phi(x)=f\left(\frac{ax^{n}+b}{cx^{n}+d}\right)-\frac{x^{n-1}g(x)}{\left(cx^{n}+d\right)^{2}} 
\qquad 
\left(x\in\mathbb{R}, \,
cx^{n}+d\neq 0\right). 
\]
Let us assume $\phi$ to be globally regular. 
Then, the functions $F, G\colon\mathbb{R}\to\mathbb{R}$ defined by 
\[
 F(x)=f(x)-f(1)x \quad \text{and} 
\quad 
G(x)=g(x)-g(1)x
\quad
\left(x\in\mathbb{R}\right)
\]
are derivations. 
\end{thm}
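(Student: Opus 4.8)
The plan is to reduce the coefficient matrix, step by step, to the elementary situations already handled by Theorem~\ref{Thm2.1}, Lemma~\ref{L2.1} and Corollary~\ref{C2.1}. Two observations drive the reduction. First, every additive function is $\mathbb{Q}$-homogeneous, so that $f(\lambda t+\mu)=\lambda f(t)+\mu f(1)$ for all $\lambda,\mu\in\mathbb{Q}$; this lets one absorb the affine part of the Möbius transformation. Second, the purely algebraic rewriting $x^{n}=1/x^{-n}$ shows that the function $\phi$ attached to the pair $\left(n,\left(\begin{smallmatrix}a&b\\c&d\end{smallmatrix}\right)\right)$ coincides with the one attached to $\left(-n,\left(\begin{smallmatrix}b&a\\d&c\end{smallmatrix}\right)\right)$ (the matrix with its columns swapped), and the two side conditions on $n$ relative to $c$ and $d$ are exchanged accordingly; hence one may assume $n\ge 1$ throughout.

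With $n\ge 1$ fixed, I would first treat $c=0$: then $\tfrac{ax^{n}+b}{cx^{n}+d}=\tfrac ad x^{n}+\tfrac bd$, and $\mathbb{Q}$-homogeneity of $f$ turns $\phi$ into $\tfrac ad f(x^{n})+\tfrac bd f(1)-\tfrac1{d^{2}}x^{n-1}g(x)$; subtracting the constant $\tfrac bd f(1)$ and replacing $g$ by $\tfrac1{ad}g$ shows that $x\mapsto f(x^{n})-x^{n-1}g(x)$ is globally regular, and since $n\neq 1$ here, Theorem~\ref{Thm2.1} with $m=1$ concludes. When $c\neq 0$, the identity $\tfrac{ax^{n}+b}{cx^{n}+d}=\tfrac ac-\tfrac{ad-bc}{c}\cdot\tfrac1{cx^{n}+d}$ together with $cx^{n}+d=c(x^{n}+\delta)$, $\delta=d/c\in\mathbb{Q}$, and the same kind of manipulation reduce the hypothesis to the global regularity of
\[
\psi(x)=f\!\left(\frac1{x^{n}+\delta}\right)-\frac{x^{n-1}g(x)}{(x^{n}+\delta)^{2}}
\]
(after replacing $g$ by a suitable rational multiple of itself). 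If $\delta=0$ this is $f(x^{-n})-x^{-n-1}g(x)$, which for $n=1$ is exactly the $(\star)$-type equation settled by Theorem~\ref{Thm2.1} (with exponents $-1$ and $1$).

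The remaining configurations — $f(x^{-n})-x^{-n-1}g(x)$ with $n\ge 2$, and $\psi$ as above with $\delta\neq 0$ — are the heart of the argument and, I expect, the main obstacle. Note that once a genuine translation is present $\psi$ is no longer $\mathbb{Q}$-homogeneous, which is precisely why the theorem must assume global rather than merely local regularity. The plan here is to imitate the device used for the case $n=-m$ of Theorem~\ref{Thm2.1} and for Lemma~\ref{L2.1}: evaluate $\psi$ along a family of arguments tied together by a partial-fraction identity — e.g. at $u$, $u+1$ and at the point whose relevant power equals $u(u+1)$ — so that, by additivity of $f$ and identities such as $\tfrac1{u(u+1)}=\tfrac1u-\tfrac1{u+1}$ (and, for the higher poles, the full partial-fraction expansion of $1/(x^{n}+\delta)$ in the shifted variables), the terms containing $f$ telescope and cancel. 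After clearing denominators one is left with an identity of the shape ``$g$ of a positive power equals a rational multiple of a lower power times $g$, plus an auxiliary function $\chi$'', where $\chi$ is assembled from finitely many values of $\psi$ and therefore inherits regularity; Theorem~\ref{Thm2.1} (or Lemma~\ref{L2.1}) then forces $G$ to be a derivation. Feeding this back into $\psi$ — whose right-hand side has now become a sum of additive functions, hence, being regular, a linear function — forces $F$ to be a derivation as well. The real work, and where I expect the difficulty to lie, is in choosing the auxiliary arguments so that \emph{every} occurrence of $f$ is cancelled (the partial-fraction expansions of $1/(x^{n}+\delta)$ in shifted variables create lower-order poles that all have to be matched) and in checking that the resulting $\chi$ is globally regular on a set to which the earlier results apply.
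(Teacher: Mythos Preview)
Your preliminary reductions---absorbing affine parts via $\mathbb{Q}$-homogeneity, disposing of $c=0$ and $d=0$ through Theorem~\ref{Thm2.1}, normalising to $\psi(x)=f\bigl(\tfrac{1}{x^{n}+\delta}\bigr)-\tfrac{x^{n-1}g(x)}{(x^{n}+\delta)^{2}}$, and the column-swap observation reducing to $n\ge 1$---are correct and parallel the paper's opening moves.

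The genuine gap is the main case $\delta\neq 0$. Your plan to cancel the $f$-terms via identities like $\tfrac{1}{u(u+1)}=\tfrac{1}{u}-\tfrac{1}{u+1}$ worked in the $n=-m$ case of Theorem~\ref{Thm2.1} only because the argument of $f$ there was $1/u$ with no shift. Once the argument is $\tfrac{1}{u+\delta}$ with $\delta\neq 0$, there is no additive telescoping identity among such values at points that are simultaneously $n$-th powers; the ``lower-order poles'' you mention do not match up, and you give no concrete mechanism for making them do so. The paper uses a different and much sharper device. Writing $t=x^{n}$, the elementary identity
\[
\frac{1}{t+d}+\frac{1}{d^{2}/t+d}=\frac{1}{d}
\]
shows, by additivity, that $f\!\left(\tfrac{1}{t+d}\right)+f\!\left(\tfrac{1}{d^{2}/t+d}\right)$ is constant. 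In the $x$-variable this is the involution $x\mapsto\sqrt[n]{d^{2}}/x$, and adding $\phi(x)$ to $\phi\bigl(\sqrt[n]{d^{2}}/x\bigr)$ eliminates every occurrence of $f$ in one stroke. What remains, after clearing $(x^{n}+d)^{2}$, is of the form $h(1/x)+x^{-2}g(x)$ with $h(y)=\bigl(\sqrt[n]{d^{2}}\bigr)^{-1}g\bigl(\sqrt[n]{d^{2}}\,y\bigr)$ additive---exactly the $n=-m$ instance of Theorem~\ref{Thm2.1}. From there your closing idea (feed the derivation property of $G$ back into $\psi$ to recover $F$) is the right way to finish.
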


\begin{proof}
 Firstly, let us suppose that  $c=0$. This means that the function 
\[
 \phi(x)=f\left(\frac{a}{d}x^{n}+\frac{b}{d}\right)-\frac{1}{d^{2}}x^{n-1}g(x)
\qquad 
\left(x\in\mathbb{R}\setminus\left\{0\right\}\right)
\]
is globally regular.  
In this case, the statement immediately follows from Theorem \ref{Thm2.1}. 

Similarly, if $d=0$, then 
\[
 \phi(x)=f\left(\frac{a}{c}+\frac{b}{c}x^{-n}\right)-x^{-n-1}g(x) 
\qquad 
\left(x\in\mathbb{R}\setminus\left\{0\right\}\right)
\]
is globally regular. Therefore, due to Theorem \ref{Thm2.1}, we obtain that the functions 
\[
 F(x)=f(x)-f(1)x \quad 
\text{and}
\quad 
G(x)=g(x)-g(1)x 
\qquad 
\left(x\in\mathbb{R}\right)
\]
are derivations. 

Thus, henceforth $cd\neq 0$ can be assumed. Furthermore, due to the $\mathbb{Q}$-homogeneity of the functions 
$f$ and $g$, $c=1$ can be supposed. That is, 
\[
 \phi(x)=f\left(\frac{ax^{n}+b}{x^{n}+d}\right)-\frac{x^{n-1}g(x)}{\left(x^{n}+d\right)^{2}} 
\qquad 
\left(x\in\mathbb{R}, \,
x^{n}+d\neq 0\right). 
\]
Since the function $f$ is additive, 
\[
 f\left(\frac{ax^{n}+b}{x^{n}+d}\right)=f(a)-f\left(\frac{D}{x^{n}+d}\right), 
\qquad 
\left(x\in\mathbb{R}, \,
x^{n}+d\neq 0\right), 
\]
therefore, 
\begin{equation}\label{Eq2}
 \phi(x)=f(a)-f\left(\frac{D}{x^{n}+d}\right)-\frac{x^{n-1}g(x)}{(x^{n}+d)^{2}}
\qquad 
\left(x\in\mathbb{R}, \,
x^{n}+d\neq 0\right), 
\end{equation}
where $D=\det\left(\begin{array}{cc}
                    a&b\\
1&d
                   \end{array}
\right)$. 
Let us observe that 
\[
 \frac{D}{x^{n}+d}=\frac{D}{d}-\frac{D}{\left(\sqrt[n]{d^{2}}\frac{1}{x}\right)^{n}+d}
\]
holds for all $x\in\mathbb{R}, \, x\neq 0, x^{n}+d\neq 0.$
Using this identity, we receive
\begin{multline}\label{Eq3}
\phi(x)=
f(a)-f\left(\frac{D}{d}\right)+f\left(\frac{D}{\left(\sqrt[n]{d^{2}}\frac{1}{x}\right)^{n}+d}\right)
-\frac{x^{n-1}g(x)}{\left(x^{n}+d\right)^{2}}
\\
\left(x\in\mathbb{R}, \, x\neq 0, x^{n}+d\neq 0\right), 
\end{multline}
where the additivity of the function $f$ was also used. 
Let us replace $x$ by $\sqrt[n]{d^{2}}\dfrac{1}{x}$ in \eqref{Eq2} to acquire 
\begin{multline*}
 \phi\left(\sqrt[n]{d^{2}}\frac{1}{x}\right)=
f(a)-f\left(\frac{D}{\left(\sqrt[n]{d^{2}}\frac{1}{x}\right)^{n}+d}\right)-
\frac{\left(\sqrt[n]{d^{2}}\frac{1}{x}\right)^{n-1}g\left(\sqrt[n]{d^{2}}\frac{1}{x}\right)}
{\left(\left(\sqrt[n]{d^{2}}\frac{1}{x}\right)^{n}+d\right)^{2}}
\\
\left(x\in\mathbb{R}, \, x\neq 0, x^{n}+d\neq 0\right). 
\end{multline*}
Since 
\[
 \left(\sqrt[n]{d^{2}}\frac{1}{x}\right)^{n}+d= d\frac{1}{x^{n}}\left(x^{n}+d\right), 
\]
the above identity yields that 
\begin{multline*}
\phi\left(\sqrt[n]{d^{2}}\frac{1}{x}\right)=
f(a)-f\left(\frac{D}{\left(\sqrt[n]{d^{2}}\frac{1}{x}\right)^{n}+d}\right)-
\frac{\left(\sqrt[n]{d^{2}}\frac{1}{x}\right)^{n-1}g\left(\sqrt[n]{d^{2}}\frac{1}{x}\right)}
{\left(d\frac{1}{x^{n}}\right)^{2}\left(\left(x^{n}+d\right)\right)^{2}} 
\\
\left(x\in\mathbb{R}, \, x\neq 0, x^{n}+d\neq 0\right).
\end{multline*}
After some rearrangement, we arrive at 
\begin{multline}\label{Eq4}
 \phi\left(\sqrt[n]{d^{2}}\frac{1}{x}\right)=
f(a)-f\left(\frac{D}{\left(\sqrt[n]{d^{2}}\frac{1}{x}\right)^{n}+d}\right)-
\frac{x^{n-1}\frac{1}{\sqrt[n]{d^{2}}}x^{2}g\left(\sqrt[n]{d^{2}}\frac{1}{x}\right)}{\left(x^{n}+d\right)^{2}}
\\
\left(x\in\mathbb{R}, \, x\neq 0, x^{n}+d\neq 0\right).
\end{multline}
In case we add \eqref{Eq3} and \eqref{Eq4} together, 
\begin{multline*}
 \phi(x)+\phi\left(\sqrt[n]{d^{2}}\frac{1}{x}\right)=
f\left(2a-\frac{D}{d}\right)-
\frac{x^{n-1}}{\left(x^{n}+d\right)^{2}} 
\left[
g(x)+x^{2}\frac{1}{\sqrt[n]{d^{2}}}g\left(\sqrt[n]{d^{2}}\frac{1}{x}\right)
\right]
\\
\left(x\in\mathbb{R}, \, x\neq 0, x^{n}+d\neq 0\right).
\end{multline*}
Let us define the functions
\[
 h(x)=\frac{1}{\sqrt[n]{d^{2}}}g\left(\sqrt[n]{d^{2}}x\right)
\qquad 
\left(x\in\mathbb{R}\right)
\]
and 
\begin{multline*}
 \psi(x)=
-\frac{1}{x^{2}}\frac{\left(x^{n}+d\right)^{2}}{x^{n-1}}
\left[
\phi(x)+\phi\left(\sqrt[n]{d^{2}}\frac{1}{x}\right)-f\left(2a-\frac{D}{d}\right)
\right]
\\
\left(x\in\mathbb{R}, \, x\neq 0, x^{n}+d\neq 0\right).
\end{multline*}
In this case
\[
 \psi(x)=h\left(\frac{1}{x}\right)+\frac{1}{x^{2}}g(x)
\quad 
\left(x\in\mathbb{R}, \, x\neq 0, x^{n}+d\neq 0\right)
\]
holds. 
By our assumptions $\phi$ is a globally regular mapping, therefore the function 
$\psi$ has the local regularity property. 
Due to Theorem \ref{Thm2.1}, this gives that the functions 
$F, G\colon\mathbb{R}\to\mathbb{R}$ defined by 
\[
 F(x)=f(x)-f(1)x
\quad 
\text{and}
\quad 
G(x)=g(x)-g(1)x 
\quad 
\left(x\in\mathbb{R}\right)
\]
are derivations.
\end{proof}

\subsection*{A characterization of linearity}

Finally, in the last part of this paper we will present a characterization of linearity. 
Just as in the proof of Theorem \ref{T2.2}, Theorem \ref{Thm2.1} will again play an important role. 

\begin{thm}
 Let $n\in\mathbb{N}, \, n\neq 1$ and $f\colon\mathbb{R}\to\mathbb{R}$ be an additive function. 
Define $\phi$ on $\mathbb{R}$ by
\[
 \phi(x)=f\left(x^{n}\right)-f(x)^{n} 
\qquad 
\left(x\in\mathbb{R}\right). 
\]
Let us assume that $\phi$ is locally regular. 
Then the function $f$ is linear, that is, 
\[
 f(x)=f(1)x
\]
holds for all $x\in\mathbb{R}$. 
\end{thm}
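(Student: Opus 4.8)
The plan is to mimic the proof of Theorem~\ref{Thm2.1}: realize $\phi$ as the diagonalization of a symmetric $n$-additive function, invoke local regularity together with Theorem~\ref{T1.3} to pin that $n$-additive function down to a monomial, and then read off the structure of $f$ by suitable substitutions.

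First I would set
\[
\Phi(x_{1},\ldots,x_{n})=f(x_{1}\cdots x_{n})-f(x_{1})\cdots f(x_{n})
\qquad(x_{1},\ldots,x_{n}\in\mathbb{R}).
\]
Since $f$ is additive, both $(x_{1},\ldots,x_{n})\mapsto f(x_{1}\cdots x_{n})$ and $(x_{1},\ldots,x_{n})\mapsto f(x_{1})\cdots f(x_{n})$ are symmetric and $n$-additive, hence so is $\Phi$, and its trace is exactly $\phi$; thus $\phi$ is a polynomial function. As $\phi$ is locally regular, Theorem~\ref{T1.3} shows that $\phi$ is a continuous polynomial function, so, arguing as in the proof of Theorem~\ref{Thm2.1}, there is a constant $c$ with $\Phi(x_{1},\ldots,x_{n})=cx_{1}\cdots x_{n}$, i.e.
\[
f(x_{1}\cdots x_{n})=f(x_{1})\cdots f(x_{n})+cx_{1}\cdots x_{n}
\qquad(x_{1},\ldots,x_{n}\in\mathbb{R}),
\]
with $c=\phi(1)=f(1)-f(1)^{n}$ upon taking every $x_{i}=1$. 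Writing $a=f(1)$ and putting $x_{2}=\cdots=x_{n}=1$ gives $(1-a^{n-1})f(x)=cx$ for all $x\in\mathbb{R}$.

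Now I would split into two cases. If $a^{n-1}\neq1$, then, because $c=a(1-a^{n-1})$, the last identity already yields $f(x)=ax=f(1)x$. The remaining case $a^{n-1}=1$ is the one I expect to be the main obstacle, since there the identity above becomes vacuous; however, now $c=0$, so $f(x_{1}\cdots x_{n})=f(x_{1})\cdots f(x_{n})$, and setting $x_{3}=\cdots=x_{n}=1$ gives $f(xy)=a^{n-2}f(x)f(y)$. Over $\mathbb{R}$ the equation $a^{n-1}=1$ forces $a=1$ (for every $n$) or $a=-1$ (only when $n$ is odd), and in both cases $a^{2}=1$ and $a^{n-2}=a$, so $f(xy)=af(x)f(y)$; in particular $f(t^{2})=af(t)^{2}$ has constant sign on $[0,+\infty[$. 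Consequently $f$ is monotone, and a monotone additive function is of the form $x\mapsto f(1)x$, which finishes the proof. The points I expect to require care are the bookkeeping in this $a=\pm1$ analysis and the step, already used in Theorem~\ref{Thm2.1}, that continuity of the trace $\phi$ propagates to $\Phi$, which follows from the representation of a symmetric $n$-additive function through iterated differences of its trace.
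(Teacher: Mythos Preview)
Your argument is correct and follows the paper's proof almost verbatim: the same symmetrization $\Phi(x_{1},\ldots,x_{n})=f(x_{1}\cdots x_{n})-f(x_{1})\cdots f(x_{n})$, the same appeal to Theorem~\ref{T1.3} to force $\Phi(x_{1},\ldots,x_{n})=cx_{1}\cdots x_{n}$ with $c=f(1)-f(1)^{n}$, and the same substitution $x_{2}=\cdots=x_{n}=1$ to settle the case $f(1)^{n-1}\neq 1$.

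The only divergence is the endgame when $a^{n-1}=1$. The paper observes that then $f(1)^{n-2}f$ is a nonzero ring homomorphism $\mathbb{R}\to\mathbb{R}$ and invokes the classical fact (Kuczma, Theorem~14.4.1) that any such map is the identity, whence $f(u)=f(1)u$. You instead analyse the real solutions of $a^{n-1}=1$ explicitly, reduce to $f(xy)=af(x)f(y)$ with $a=\pm1$, and conclude via the sign of $f(t^{2})=af(t)^{2}$ that $f$ is monotone, hence linear. Both routes are standard; yours is slightly more elementary and self-contained (no external theorem on real homomorphisms is needed), while the paper's is marginally shorter because it avoids the case split on $a$. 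Note, incidentally, that your monotonicity argument is exactly what underlies the cited theorem of Kuczma, so the two proofs are really the same idea unpacked to different depths.
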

\begin{proof}
 Let us define the function $\Phi\colon\mathbb{R}^{n}\to\mathbb{R}$ by 
\[
 \Phi(x_{1}, \ldots, x_{n})=f\left(x_{1}\cdots x_{n}\right)-f(x_{1})\cdots f(x_{n}) 
\qquad 
\left(x_{1}, \ldots, x_{n}\in\mathbb{R}\right). 
\]
Due to the additivity of $f$, the function $\Phi$ is a symmetric, $n$-additive function. Furthermore, 
\[
 \Phi(x, \ldots, x)=\phi(x)=f\left(x^{n}\right)-f(x)^{n}
\qquad 
\left(x\in\mathbb{R}\right). 
\]
From the local regularity of the function $\phi$  we immediately deduce that 
$\phi$ is a continuous polynomial function. 
Consequently, 
\begin{equation}\label{Eq5}
 \Phi(x_{1}, \ldots, x_{n})=cx_{1}\cdots x_{n} 
\qquad 
\left(x_{1}, \ldots, x_{n}\in\mathbb{R}\right)
\end{equation}
holds with a certain $c\in\mathbb{R}$. Due to the definition of the 
function $\phi$, we have $\phi(1)=f(1)-f(1)^{n}$. On the other hand 
\[
 \phi(1)=\Phi(1, \ldots, 1)=c. 
\]
Hence $c=f(1)-f(1)^{n}$. 
Let $u\in\mathbb{R}$, with the substitution 
\[
 x_{1}=u, \qquad x_{i}=1 \, \text{for $i\geq 2$}, 
\]
equation \eqref{Eq5} yields that 
\[
 f(u)-f(u)f(1)^{n-1}=\left(f(1)-f(1)^{n}\right)u  
\qquad 
\left(u\in\mathbb{R}\right). 
\]
In case $f(1)^{n-1}\neq 1$, this furnishes 
\[
 f(u)=f(1)u 
\qquad 
\left(u\in\mathbb{R}\right). 
\]
If $f(1)^{n-1}=1$, then 
\[
 c=f(1)-f(1)^{n}=f(1)\left[1-f(1)^{n-1}\right]=0. 
\]
Therefore equation \eqref{Eq5} with the substitutions 
\[
 x_{1}=u, \quad x_{2}=v, \quad \text{and} \quad x_{i}=1 \quad \text{for $i\geq 3$} 
\qquad 
\left(u, v\in\mathbb{R}\right)
\]
yields that 
\[
 f(uv)=f(u)f(v)f(1)^{n-2} 
\qquad 
\left(u, v\in\mathbb{R}\right), 
\]
that is, $f(1)^{n-2}\cdot f$ is a non identically zero real homomorphism. 
In view of Kuczma \cite[Theorem 14.4.1.]{Kuc09} this implies that 
\[
 f(1)^{n-2}f(u)=u 
\qquad 
\left(u\in\mathbb{R}\right). 
\]
Since $f(1)^{n-2}\cdot f(1)=f(1)^{n-1}=1$, 
\[
 \frac{f(u)}{f(1)}=u
\]
holds for all $u\in\mathbb{R}$, that is, $f$ is a linear function, indeed. 
\end{proof}

\subsection*{Acknowledgements}
This paper is dedicated to the 60\textsuperscript{\textrm{t}h} birthday of 
Professor L\'{a}szl\'{o} Sz\'{e}kelyhidi (University of Debrecen, Hungary). 
Furthermore, the author wishes to express her gratitude to the two anonymous referees 
for their work.

\end{document}